\date{\today}
\newtheorem{theorem}{Theorem}
\newtheorem{proposition}[theorem]{Proposition}
\newtheorem{corollary}[theorem]{Corollary}
\theoremstyle{definition}
\newtheorem{example}[theorem]{Example}
\newtheorem{remark}[theorem]{Remark}
\begin{document}

\title[Maximal subgroups in countably compact
topological semigroups] {The continuity of the inversion and the
structure\\ of maximal subgroups in countably compact
\\ topological semigroups}

\author[O. Gutik]{Oleg~Gutik}
\address{Department of Mechanics and Mathematics, Ivan Franko Lviv
National University, Universytetska 1, Lviv, 79000, Ukraine}
\email{o\_\,gutik@franko.lviv.ua, ovgutik@yahoo.com}
\author[D. Pagon]{Du\v{s}an~Pagon}
\address{Institute of Mathematics, Physics and Mechanics, and
Faculty of Natural Sciences and Mathematics, University of
Maribor, Gosposvetska 84, Maribor 2000, Slovenia}
\email{dusan.pagon@uni-mb.si}
\author[D. Repov\v s]{Du\v{s}an~Repov\v{s}}
\address{Faculty of Mathematics and Physics, and
Faculty of Education, University of Ljubljana,
P.~O.~B. 2964, Ljubljana, 1001, Slovenia}
\email{dusan.repovs@guest.arnes.si}

\keywords{Topological semigroup, topological inverse semigroup,
sequential space, sequentially compact space, countably compact
space, pseudocompact space, regular space, quasi-regular space,
subgroup, closure, inversion, paratopological group, topological
group, Clifford semigroup, topologically periodic semigroup,
$\textsf{MA}\,_{\textrm{countable}}$, selective ultrafilter.}

\subjclass[2000]{Primary 22A15, 54H10, 54D55. Secondary 22A05,
54D30, 54D40}

\begin{abstract}
In this paper we search for conditions on a countably compact
(pseudo-compact) topological semigroup under which: 
(i) each maximal
subgroup $H(e)$ in $S$ is a (closed) topological subgroup in $S$;
(ii) the Clifford part $H(S)$(i.e. the union of all maximal subgroups) of
the semigroup $S$ is a closed subset in $S$;  
(iii)
the inversion
$\operatorname{inv}\colon H(S)\rightarrow  H(S)$ is continuous; 
and
(iv) the projection $\pi\colon H(S)\rightarrow E(S)$, $\pi\colon
x\longmapsto xx^{-1}$, onto the subset of idempotents $E(S)$ of
$S$, is continuous.
\end{abstract}

\maketitle



In this paper all topological spaces will be assumed to be
Hausdorff. We shall follow the terminology
of~\cite{CarruthHildebrantKoch1983-1986, CliffordPreston1961-1967,
Engelking1989}.  We shall
denote the cardinality of
continuum
by $\mathfrak{c}$ and
the topological closure of
subset $A$ in a topological space by $\overline{A}$
We shall call a $T_3$-space a regular
topological space.

A topological space $X$ is said to be
\emph{countably compact} if any
countable open cover of $X$ contains a finite
subcover~\cite{Engelking1989}. 
A topological space $X$ is called
\emph{pseudocompact} if each continuous real-valued function on
$X$ is bounded~\cite{Engelking1989}. A topological space $X$ is
said to be
\emph{sequential} if each non-closed subset $A$ of $X$
contains a sequence of points $\{ x_n\}_{n=1}^\infty$ that
converges to some point $x\in X\setminus A$. 
Obviously, a
topological space $X$ is sequential if a subset $A$ of $X$ is
closed if and only if together with any convergent sequence $A$
contains its limit~\cite{Engelking1989}. 
A topological space $X$
is called \emph{sequentially compact} if each sequence $\{
x_n\}_{n=1}^\infty\subset X$ has a convergent
subsequence~\cite{Engelking1989}.

We recall that the Stone-\v{C}ech compactification of a Tychonoff
space $X$ is a compact Hausdorff space $\beta{X}$ containing $X$
is a dense subspace so that each continuous map $f\colon
X\rightarrow Y$ to a compact Hausdorff space $Y$ extends to a
continuous map $\overline{f}\colon \beta{X}\rightarrow
Y$~\cite{Engelking1989}.

A \emph{semigroup} is a set with a binary associative operation.
An element $e$ of a semigroup $S$ is called an \emph{idempotent}
if $e \circ e=e$. If $S$ is a~semigroup, then  we denote the
subset of all idempotents of~$S$
by $E(S)$.
A semigroup $S$ is called {\em inverse} if for any $x\in S$ there
exists a unique $y\in S$ such that $xyx=x$ and $yxy=y$. Such an
element $y$ is called {\em inverse} of $x$ and is denoted by
$x^{-1}$. If $S$ is an inverse semigroup, then the map which takes
$x\in S$ to the inverse element of $x$ is called the {\em
inversion} and will be denoted by $\operatorname{inv}$.

If $S$ is a semigroup and $e$ is an idempotent in $S$, then $e$
lies in the maximal subgroup $H(e)$ with the identity $e$. If a
semigroup $S$ is a union of groups then $S$ called
\emph{Clifford}. On a Clifford semigroup
$S=\bigcup\{H(e)\mid e\in
E(S)\}$ 
the inversion $\operatorname{inv}\colon
S\rightarrow S$ is defined
which maps each element $x\in H(e)$ to its inverse
element $x^{-1}$ in $H(e)$. 
We also
observe that on any Clifford
semigroup the \emph{projection} $\pi\colon S\rightarrow E(S)$,
$\pi(x)=x\cdot x^{-1}$, is defined. For a semigroup $S$ let
\begin{equation*}
\begin{split}
    H(S)= & \bigcup_{e\in E(S)}\{H(e)\mid H(e) \mbox{~is a maximal
    subgroup in~} S \mbox{~with identity~} e\}=\\
        =& \{ s\in S\mid \mbox{~there exists~}\; y\in S \;\mbox{~such
        that~}\; xy=yx,\; xyx=x,\; yxy=y\}.
\end{split}
\end{equation*}

A topological space $S$ which is algebraically a semigroup with a
continuous semigroup operation is called a {\em topological
semigroup}. A {\em topological inverse semigroup} is a topological
semigroup $S$ that is algebraically an inverse semigroup with
continuous inversion. If $\tau$ is a topology on a (inverse)
semigroup $S$ such that $(S,\tau)$ is a topological (inverse)
semigroup, then $\tau$ is called a ({\em inverse}) {\em semigroup
topology} on $S$.
By a \emph{paratopological group} we understand a pair $(G,\tau)$
consisting of a group $G$ and a topology $\tau$ on $G$ making the
group operation on $G$ continuous. A paratopological group $G$
with continuous inversion is
called a \emph{topological group}.

Finite semigroups and compact topological semigroups have similar
properties. For example every finite semigroup and every compact
topological semigroup contain idempotents and minimal
ideals~\cite{Wallace1955}, which are completely simple
semigroups~\cite{Suschkewitsch1928, Wallace1956}, and every
($0$--)simple compact topological (and hence finite) semigroup is
completely ($0$--)simple \cite{Paalman-de-Miranda1964,
Suschkewitsch1928, Wallace1956}. Also, a cancellative compact
topological (and hence finite) semigroup is a topological
group~\cite{Numakura1952}.

Compact topological semigroups do not contain the bicyclic
semigroup~\cite{HildebrantKoch1988}.  
Gutik and Repov\v{s} 
\cite{GutikRepovs2007}
proved that a countably compact topological
inverse semigroup does not contain the bicyclic semigroup. Banakh,
Dimitrova and Gutik 
\cite{BanakhDimitrovaGutik2009x} showed
that no
topological semigroup $S$ with countably compact square
$S\times S$ contains
the bicyclic semigroup. 
They also constructed in
\cite{BanakhDimitrovaGutik2009xx}
a consistent
example a countably compact topological semigroup $S$ which
contains the bicyclic semigroup.

It is well known that the closure of a (commutative) subsemigroup
of a topological semigroup is a (commutative) subsemigroup
\cite[Vol.~1, P.~9]{CarruthHildebrantKoch1983-1986}. 
Note that the
closure of a subgroup in a topological semigroup is not
necesarily a subgroup. But in the case when $S$ is a compact
topological semigroup or a topological inverse semigroup, the
closure of a subgroup in $S$ is a subgroup, moreover every maximal
subgroup of $S$ is closed (see \cite[Vol.~1,
Theorem~1.11]{CarruthHildebrantKoch1983-1986} and
\cite{EberhartSelden1969}). 

Also, every compact subgroup of a
topological semigroup with induced topology is a topological
semigroup. The results when the inversion is continuous in a
topological semigroup which is algebraically a group (i.~e. a
paratopological group) have been extended to some classes of
``compact-like'' paratopological groups, in particular: regular
locally compact paratopological groups \cite{Ellis1953}, regular
countably compact paratopological groups
\cite{RavskyReznichenko2002}, quasi-regular pseudo-compact
paratopological groups \cite{RavskyReznichenko2002}, topologically
periodic Hausdorff countably compact paratopological groups
\cite{BokaloGuran1996}, \v Cech-complete paratopological groups
\cite{Brand1982}, strongly Baire semi-topological groups
\cite{KenderovKortezovMoors2001}. 

On the other hand, Ravsky~\cite{Ravsky2003}, 
using
a result of Koszmider, Tomita and Watson
\cite{KoszmiderTomitaWatson2000}, constructed a
\textsf{MA}-example of a Hausdorff countably compact
paratopological group failing to be a topological
group. Also Grant~\cite{Grant1993} and
Yur'eva~\cite{Yuryeva1993} showed that a Hausdorff cancellative
sequential countably compact topological semigroup is a
topological group. Bokalo and Guran 
\cite{BokaloGuran1996}
established that an analogous theorem is true for cancellative
sequentially compact semigroups. Robbie and
Svetlichny~\cite{RobbieSvetlichny1996} 
constructed a
\textsf{CH}-example of a countably compact topological semigroup
which is not a topological group.

In summary (see \cite{CarruthHildebrantKoch1983-1986}), for
a compact topological semigroup $S$ the following conditions hold:
\begin{itemize}
    \item[(1)] each maximal subgroup $H(e)$ in $S$ is a compact
               topological subgroup in $S$;
    \item[(2)] the subset $H(S)$ is closed in $S$;
    \item[(3)] the inversion map $\operatorname{inv}\colon
               H(S)\rightarrow  H(S)$ is continuous; and
    \item[(4)] the projection $\pi\colon H(S)\to E(S)$ is continuous.
\end{itemize}

Since sequential compactness, countable compactness and
pseudo-compactness are generalization of compactness, it is
natural to pose the following question: \emph{For which
compact-like topological semigroups do
the conditions $(1)$--$(4)$ above
hold?}

In this paper we shall answer this question
by giving sufficient
conditions on a countably compact (pseudo-compact) topological
semigroup under which: 
(i) each maximal subgroup $H(e)$ in $S$ is a
(closed) topological subgroup in $S$; 
(ii) the Clifford part $H(S)$ of
the semigroup $S$ is a closed subset in $S$; and
(iii) the inversion
$\operatorname{inv}\colon H(S)\rightarrow  H(S)$ and the
projection $\pi\colon H(S)\to E(S)$ are continuous.


A topological group $G$ is called \emph{totally bounded} if for
any open neighbourhood $U$ of the identity $e$ of $G$ there exists
a finite subset $A$ in $G$ such that $A\cdot U=G$ (see~\cite{Weil1937}).

\begin{theorem}\label{th1}
Let $S$ be a Tychonoff topological semigroup with the
pseudocompact square $S\times S$. Then $S$ embeds into a compact
topological semigroup and the following conditions hold:
\begin{itemize}
    \item[$(i)$] the inversion $\operatorname{inv}\colon
                 H(S)\rightarrow H(S)$ is continuous;
    \item[$(ii)$] the projection $\pi\colon
                 H(S)\rightarrow E(S)$ is continuous; and
    \item[$(iii)$] for each idempotent $e\in E(S)$ the maximal
                 subgroup $H(e)$ is a totally bounded topological
                 group.
\end{itemize}
\end{theorem}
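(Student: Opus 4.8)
The plan is to realise $S$ as a subsemigroup of its Stone--\v{C}ech compactification $\beta S$ and to equip $\beta S$ with a compact topological semigroup structure, after which conditions $(i)$--$(iii)$ follow by restricting the corresponding facts for compact topological semigroups recorded in the summary above. Since $S\times S$ is pseudocompact, so is $S$ (a continuous real-valued function on $S$ pulls back along a projection to a bounded function on $S\times S$), and as $S$ is Tychonoff the compactification $\beta S$ exists. The crucial input is Glicksberg's theorem: because $S\times S$ is pseudocompact, the canonical map identifies $\beta(S\times S)$ with $\beta S\times\beta S$.

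First I would extend the multiplication. The operation $m\colon S\times S\to S\subseteq\beta S$ is continuous into the compact Hausdorff space $\beta S$, hence extends to a continuous map $\overline{m}\colon\beta(S\times S)\to\beta S$; through the Glicksberg identification this becomes a continuous map $\tilde m\colon\beta S\times\beta S\to\beta S$ extending $m$. In particular $\tilde m$ sends $S\times S$ into $S$, so $S$ is a subsemigroup of $(\beta S,\tilde m)$. Associativity of $\tilde m$ follows by a density argument: the maps $(a,b,c)\mapsto\tilde m(\tilde m(a,b),c)$ and $(a,b,c)\mapsto\tilde m(a,\tilde m(b,c))$ are continuous on $\beta S\times\beta S\times\beta S$, they coincide on the dense subset $S\times S\times S$ by associativity in $S$, and $\beta S$ is Hausdorff, so they coincide everywhere. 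Thus $\beta S$ is a compact topological semigroup containing $S$, which proves the embedding assertion.

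Next I would transfer conditions $(1)$--$(4)$ for the compact semigroup $\beta S$ down to $S$. For an idempotent $e\in E(S)\subseteq E(\beta S)$, if $x\in H_S(e)$ with inverse $y\in S$ (so that $xy=yx=e$ and $ex=xe=x$), then these same relations hold in $\beta S$, whence $x\in H_{\beta S}(e)$ and, by uniqueness of inverses in the group $H_{\beta S}(e)$, the inverse of $x$ computed in $\beta S$ equals $y$. Therefore $H(S)\subseteq H(\beta S)$ and the inversion on $H(S)$ is exactly the restriction of the inversion on $H(\beta S)$; since the latter is continuous in the compact case, $\operatorname{inv}\colon H(S)\to H(S)$ is continuous, giving $(i)$. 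As $\pi(x)=x\cdot x^{-1}$ and $x^{-1}$ agrees in $S$ and in $\beta S$, the projection on $H(S)$ is likewise the restriction of the continuous projection on $H(\beta S)$, giving $(ii)$. Finally, each $H_S(e)$ is a subgroup of the compact topological group $H_{\beta S}(e)$ carrying the same subspace topology, hence is itself a topological group, and a subgroup of a compact (hence totally bounded) group is totally bounded, giving $(iii)$.

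The main obstacle is the first step: promoting $\beta S$ to a topological semigroup. The existence of a pointwise extension of $m$ is automatic, but obtaining joint continuity of $\tilde m$ on $\beta S\times\beta S$ genuinely requires the Glicksberg identification $\beta(S\times S)\cong\beta S\times\beta S$, and this is exactly where the pseudocompactness of the square $S\times S$, rather than of $S$ alone, is used; without it the extended multiplication need not be continuous in both variables simultaneously. Once this identification is in place the remaining assertions are routine restrictions of the known compact theory.
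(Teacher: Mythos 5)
Your proof is correct and follows the same overall strategy as the paper: realise $S$ as a subsemigroup of a compact topological semigroup structure on $\beta S$, then obtain $(i)$--$(iii)$ by restriction. The difference lies only in what is proved versus cited. Where you derive the continuous extension of the multiplication to $\beta S\times\beta S$ from Glicksberg's theorem $\beta(S\times S)\cong\beta S\times\beta S$ together with a density argument for associativity, the paper simply invokes Theorem~1.3 of Banakh and Dimitrova (whose proof is exactly this Glicksberg argument), so your version is more self-contained at that step. Conversely, where you quote the known compact theory for the continuity of inversion on $H(\beta S)$, the paper gives an explicit argument: the graph $\{(x,y)\mid xyx=x,\ yxy=y,\ xy=yx\}$ is closed, hence compact, in $\beta S\times\beta S$, and both coordinate projections restricted to it are homeomorphisms onto $H(\beta S)$, so $\operatorname{inv}|_{H(\beta S)}=\operatorname{pr}_2\circ(\operatorname{pr}_1)^{-1}$ is continuous; your route instead leans on Carruth--Hildebrant--Koch for this fact, which is legitimate. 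Your explicit verification that $H(S)\subseteq H(\beta S)$ and that inverses computed in $S$ agree with those computed in $\beta S$ is a worthwhile addition: the paper leaves this implicit in the phrase ``restriction of a continuous map,'' yet it is precisely what justifies restricting. For $(iii)$ the two arguments coincide: $H(e)$ is a subgroup of the compact topological group $H_{\beta S}(e)$, hence a totally bounded topological group.
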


\begin{proof}
By Theorem~1.3 from~\cite{BanakhDimitrova2009xx},
for any topological
semigroup $S$ with the pseudocompact square $S\times S$ the
semigroup operation $\mu\colon S\times S\rightarrow S$ extends to
a continuous semigroup operation $\beta\mu\colon\beta S\times\beta
S\rightarrow\beta S$, so $S$ is a subsemigroup of the compact
topological semigroup $\beta S$.

$(i)$ Let
\begin{equation*}
\operatorname{Gr}_{\operatorname{inv}}(H(\beta S))=\{(x, y)\in
S\times S\mid y=x^{-1}\}
\end{equation*}
be the graph of the inversion in $H(\beta S)$. Since $\beta S$ is
a topological semigroup and
\begin{equation*}
\operatorname{Gr}_{\operatorname{inv}}(H(\beta S))=\{(x, y)\in
S\times S\mid xyx=x, yxy=y\mbox{ and } xy=yx\},
\end{equation*}
the graph $\operatorname{Gr}_{\operatorname{inv}}(H(\beta S))$ is
a compact subset of $\beta S\times\beta S$.

Consider the natural projections $\operatorname{pr}_1\colon \beta
S\times\beta S\to\beta S$ and $\operatorname{pr}_2\colon\beta
S\times\beta S\to\beta S$ onto the first and the second
coordinates, respectively. It follows from the compactness of
$\operatorname{Gr}_{\operatorname{inv}}(H(\beta S))$ that
$\operatorname{pr}_1\colon
\operatorname{Gr}_{\operatorname{inv}}(H(\beta S))\to H(\beta S)$
and $\operatorname{pr}_2\colon
\operatorname{Gr}_{\operatorname{inv}}(H(\beta S))\to H(\beta S)$
are homeomorphisms. Consequently, the inversion
$\operatorname{inv}|_{H(\beta S)}=\operatorname{pr}_2\circ
(\operatorname{pr}_1)^{-1}\colon H(\beta S)\to H(\beta S)$ is
continuous, being a composition of homeomorphisms. Therefore the
inversion $\operatorname{inv}\colon H(S)\rightarrow H(S)$ is
continuous as a restriction of a continuous map.

\smallskip
$(ii)$ The projection $\pi\colon H(S)\rightarrow E(S)$ is
continuous as a composition of two continuous maps.

\smallskip
$(iii)$ Given an idempotent $e\in E(S)$, consider the maximal
subgroup $H_\beta(e)$  in $\beta S$ containing $e$. Then by
Theorems~1.11 and 1.13 from
\cite[Vol.~1]{CarruthHildebrantKoch1983-1986}, $H_\beta(e)$ is a
compact topological group and since $H(e)$ is a subgroup of
$H_\beta(e)$ the inversion $\operatorname{inv}\colon
H(e)\rightarrow H(e)$ is continuous, and $H(e)$ is a totally
bounded topological group, see \cite{Weil1937}.
\end{proof}

Theorem~\ref{th1} implies the following:

\begin{corollary}\label{cor1-1}
If $S$ is a Tychonoff Clifford topological semigroup with the
pseudocompact square $S\times S$ then the inversion in $S$ is
continuous.
\end{corollary}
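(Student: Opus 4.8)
The plan is to derive Corollary~\ref{cor1-1} directly from Theorem~\ref{th1}, whose hypotheses are tailored precisely to this situation. Since $S$ is assumed to be a Clifford semigroup, it is by definition a union of its maximal subgroups, so every element of $S$ lies in some $H(e)$; this means $H(S)=S$. Consequently the inversion $\operatorname{inv}\colon S\rightarrow S$ coincides with the map $\operatorname{inv}\colon H(S)\rightarrow H(S)$ appearing in part~$(i)$ of the theorem.

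First I would verify that $S$ satisfies the hypotheses of Theorem~\ref{th1}: $S$ is Tychonoff and its square $S\times S$ is pseudocompact, which are exactly the standing assumptions. Then I would invoke part~$(i)$ of Theorem~\ref{th1} to conclude that $\operatorname{inv}\colon H(S)\rightarrow H(S)$ is continuous. The only remaining point is the identification $H(S)=S$, which is immediate from the definition of a Clifford semigroup as $S=\bigcup\{H(e)\mid e\in E(S)\}$ given in the preliminaries; indeed, the displayed formula for $H(S)$ shows that $H(S)$ is precisely the set of elements lying in some maximal subgroup, which for a Clifford semigroup is all of $S$.

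The argument is therefore essentially a one-line deduction, and I do not anticipate a genuine obstacle. The only subtlety worth checking is that the inversion furnished by Clifford structure (sending $x\in H(e)$ to its group-theoretic inverse in $H(e)$) agrees with the inversion $\operatorname{inv}$ of Theorem~\ref{th1}; but both are defined by sending each $x$ to the unique $y$ in the same maximal subgroup with $xy=yx$, $xyx=x$, $yxy=y$, so they coincide. Hence continuity of $\operatorname{inv}$ on $H(S)=S$ is exactly continuity of the inversion on $S$, completing the proof.
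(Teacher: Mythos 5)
Your proof is correct and is exactly the paper's argument: the paper states Corollary~\ref{cor1-1} as an immediate consequence of Theorem~\ref{th1}, relying on precisely the identification $H(S)=S$ for a Clifford semigroup that you spell out. Your extra check that the Clifford inversion agrees with the inversion of Theorem~\ref{th1} is a reasonable piece of diligence but introduces nothing beyond the paper's intended one-line deduction.
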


An element $x$ of a topological semigroup $S$ is called
\emph{topologically periodic} if for any open neighbourhood $U(x)$
of $x$ there exists an integer $n\geqslant 2$ such that $x^n\in
U(x)$. A topological semigroup $S$ is called \emph{topologically
periodic} if any element of $S$ is topologically periodic.

\begin{remark}\label{rem1-2}
The following observation implies that \emph{an element of any
(not necessarily Hausdorff) topological semigroup $S$ is
topologically periodic if and only if for any integer $n\geqslant
2$ and for any open neighbourhood $U(x)$ of $x$ there exists an
integer $m\geqslant n$ such that $x^m\in U(x)$.} 
Let $k\geqslant
2$ be an integer
such that $x^k\in U(x)$. Then the continuity of the
semigroup operation implies that there exists an open
neighbourhood $V(x)$ of $x$ such that $\big(V(x)\big)^k\subseteq
U(x)$. Since $x$ is topologically periodic there exists an integer
$m\geqslant 2$ such that $x^m\in V(x)$. Hence we have
$x^{km}\in\big(V(x)\big)^k\subseteq U(x)$ and $km\geqslant
4=2^{2}$. Proceeding by induction, we can find an integer
$p\geqslant 2^n>n$ such that $x^p\in U(x)$.
\end{remark}

\begin{theorem}\label{th2}
Let $S$ be a Hausdorff topological semigroup with the countably
compact square $S\times S$. Then:
\begin{itemize}
    \item[$(i)$] each maximal subgroup $H(e)$ of $S$ is a
                 countably compact topological group; and
    \item[$(ii)$] the subset $H(S)$ is countably compact.
\end{itemize}
\end{theorem}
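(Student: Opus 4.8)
The plan is to derive both statements from a single observation about closed graphs inside the countably compact square. Since $S$ is Hausdorff with continuous multiplication, the set
\[
\Gamma=\{(u,v)\in S\times S \mid uvu=u,\ vuv=v,\ uv=vu\}
\]
is closed in $S\times S$, because each defining relation is the equalizer of two continuous maps into a Hausdorff space; and for a fixed idempotent $e\in E(S)$ so is
\[
\Gamma_e=\{(u,v)\in S\times S \mid uvu=u,\ vuv=v,\ uv=vu=e\}.
\]
A routine check shows that the first-coordinate projection carries $\Gamma$ onto $H(S)$ and $\Gamma_e$ onto $H(e)$, the second coordinate being in each case the group inverse of the first (for $\Gamma_e$ one uses the relations to see $eu=ue=u$, so that $u\in eSe$ is a unit with inverse $v$).

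For the countable compactness in $(i)$ and for $(ii)$ I would argue uniformly. Given a sequence $\{x_n\}$ in $H(e)$ (resp. in $H(S)$), consider the pairs $\{(x_n,x_n^{-1})\}$, which lie in $\Gamma_e$ (resp. in $\Gamma$). As $S\times S$ is countably compact, this sequence has a cluster point $(a,b)$; since it is contained in a closed set, $(a,b)\in\Gamma_e$ (resp. $\in\Gamma$), whence $a\in H(e)$ (resp. $a\in H(S)$) with $b=a^{-1}$. The first-coordinate projection, being continuous, sends $(a,b)$ to a cluster point $a$ of $\{x_n\}$. As the sequence was arbitrary, $H(e)$ and $H(S)$ are countably compact. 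This is precisely the place where countable compactness of the \emph{square} (rather than of $S$ alone) is essential, in keeping with Ravsky's example of a countably compact paratopological group that is not a topological group.

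It remains to upgrade $H(e)$ to a topological group. With the subspace topology $H(e)$ is a Hausdorff paratopological group (the restricted multiplication is a continuous group operation), and it is countably compact by the previous step. By the theorem of Bokalo and Guran \cite{BokaloGuran1996} it therefore suffices to prove that $H(e)$ is topologically periodic, and this is the step I expect to be the main obstacle. Fix $x\in H(e)$ and apply the argument above to $\{(x^n,x^{-n})\}_{n\geq 1}\subseteq\Gamma_e$: it has a cluster point $(p,p^{-1})\in\Gamma_e$, so $p\in H(e)$ and $pp^{-1}=e$. The key recurrence claim is that $e$ is itself a cluster point of $\{x^n\}_{n\geq 1}$.

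To prove the claim, let $W$ be a neighbourhood of $e$ and $N\in\mathbb{N}$. Continuity of multiplication at $(p,p^{-1})$ yields neighbourhoods $U\ni p$ and $V\ni p^{-1}$ with $U\cdot V\subseteq W$; since $(p,p^{-1})$ is a cluster point, the index set $I=\{n\mid x^n\in U,\ x^{-n}\in V\}$ is infinite, so one may pick $n_1,n_2\in I$ with $n_2-n_1\geq N$, and then $x^{\,n_2-n_1}=x^{n_2}x^{-n_1}\in U\cdot V\subseteq W$. Thus arbitrarily high powers of $x$ meet every neighbourhood of $e$, establishing the claim. Finally, the left translation $s\mapsto xs$ of $S$ is continuous, so from $x=x\cdot e$ it sends the cluster point $e$ of $\{x^n\}_{n\geq1}$ to a cluster point $x$ of $\{x^{m}\}_{m\geq2}$; in particular every neighbourhood of $x$ contains some $x^{m}$ with $m\geq2$, i.e. $x$ is topologically periodic. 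As $x\in H(e)$ was arbitrary, $H(e)$ is topologically periodic and hence a topological group, which completes the proof.
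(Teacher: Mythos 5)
Your proof is correct and takes essentially the same approach as the paper's: both obtain countable compactness of $H(e)$ and $H(S)$ by projecting closed relation sets (your $\Gamma_e$, $\Gamma$) out of the countably compact square $S\times S$, and both upgrade $H(e)$ to a topological group by proving topological periodicity and invoking the Bokalo--Guran theorem for countably compact topologically periodic paratopological groups. The only cosmetic difference is in the periodicity step: the paper clusters the shifted sequence $\{(x^{n+1},x^{-n})\}_{n=1}^\infty$, whose coordinatewise product is constantly $x$, so the cluster point $(a,b)$ satisfies $ab=x$ and yields $x^{m-n}\in U(x)$ with $m-n\geqslant 2$ directly, whereas you cluster $\{(x^{n},x^{-n})\}_{n=1}^\infty$ to establish recurrence at the identity $e$ and then transport it to $x$ by the continuous left translation $s\mapsto xs$ (which, as you correctly use, preserves cluster points of sequences).
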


\begin{proof}
$(i)$ Let $H(e)$ be any maximal subgroup of $S$. Since the
semigroup operation in $S$ is continuous the subset
\begin{equation*}
    G=\{(x,y)\in S\times S\mid xy=yx=e,\; xe=ex=x,\; ye=ey=y\}
\end{equation*}
is closed in $S\times S$ and Theorem~3.10.4 from~\cite{Engelking1989}
implies that $G$ is a countably compact subset in $S\times S$.
Consider the natural projection $\operatorname{pr}_1\colon S\times
S\rightarrow S$ onto the first coordinate. Since
$\operatorname{pr}_1(G)=H(e)$ and the projection
$\operatorname{pr}_1\colon S\times S\rightarrow S$ is a continuous
map Theorem~3.10.5 from~\cite{Engelking1989} implies that $H(e)$ is a
countably compact subspace of $S$.

Next, we show that $H(e)$ is a topologically periodic
paratopological group. Let $x$ be an arbitrary element of the
subgroup $H(e)$ and $U(x)$ be any open neighbourhood of $x$. We
consider the sequence $\{(x^{n+1},x^{-n})\}_{n=1}^\infty$ in
$H(e)\times H(e)\subseteq S\times S$. The countable compactness of
$S\times S$ guarantees that this sequence has an accumulation
point $(a, b)\in S\times S$. Since $x^{n+1}\cdot x^{-n}=x$,
the
continuity of the semigroup operation on $S$ guarantees that $ab=
x$. Then for any open neighbourhood $U(x)$ of $x$ in $S$ there
exist open neighbourhoods $U(a)$ and $U(b)$ of the point $a$ and
$b$ in $S$, respectively, such that $U(a)U(b)\subseteq U(x)$.
Since $(a, b)$ is an accumulation point of the sequence
$\{(x^{n+1},x^{-n})\}_{n=1}^\infty$ in $S\times S$,
there exist positive integers $m$ and $n$ such that $x^m\in U(a)$,
$x^{-n}\in U(b)$ and $m\geqslant n+2$. Hence we get that $x^m\cdot
x^{-n}=x^{m-n}\in U(a)\cdot U(b)\subseteq U(x)$ and $m-n\geqslant
2$. Therefore $H(e)$ is a topologically periodic paratopological
group. By Bokalo-Guran Theorem (see
\cite[Theorem~3]{BokaloGuran1996}) any countably compact
paratopological group is a topological group. Consequently, $H(e)$
is a countably compact topological group.

\smallskip
$(ii)$ Since the semigroup operation in $S$ is continuous,
\begin{equation*}
    H=\{(x,y)\in S\times S\mid xyx=x,\; yxy=y,\; xy=yx\in E(S)\}
\end{equation*}
is a closed subset in $S\times S$ and
Theorem~3.10.4 from~\cite{Engelking1989} implies that $H$ is a
countably compact subset in $S\times S$. Consider the natural
projection $\operatorname{pr}_1\colon S\times S\rightarrow S$ onto
the first coordinate. Since $\operatorname{pr}_1(H)=H(S)$ and the
projection $\operatorname{pr}_1\colon S\times S\rightarrow S$ is a
continuous map, Theorem~3.10.5 from~\cite{Engelking1989} implies that
$H(S)$ is a countably compact subspace of $S$.
\end{proof}

\begin{proposition}\label{pr2-1}
Let $x$ be a topologically periodic element of a maximal subgroup
$H(e)$ with the unity $e$ in a topological semigroup $S$. Then the
inversion $\operatorname{inv}\colon H(S)\rightarrow H(S)$ is
continuous at $x$ if and only if it is continuous at the
idempotent $e$.
\end{proposition}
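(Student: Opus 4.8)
The plan is to prove the two implications separately, using as the single engine the fact that all powers of a fixed element $y$ lie in one and the same maximal subgroup $H(f)$, so that on the cyclic subsemigroup generated by $y$ the inversion acts transparently by $y^{n}\mapsto y^{-n}$ and obeys the reversal law automatically. Topological periodicity of $x$ (in the strengthened form of Remark~\ref{rem1-2}) is what ties the three relevant points together: since $x$ is a cluster point of $\{x^{m}\}_{m\ge 1}$ and the right translations $z\mapsto zx^{-1}$ and $z\mapsto zx^{-2}$ are continuous, the idempotent $e=\lim x^{m-1}$ and the inverse $x^{-1}=\lim x^{m-2}$ are also cluster points of the powers of $x$. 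Thus for arbitrarily large $k$ one may choose a single exponent with $x^{k-1}$ as close to $e$ and $x^{k-2}$ as close to $x^{-1}$ as desired.

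For the implication ``continuous at $e$ $\Rightarrow$ continuous at $x$'', I would take any net $y_\alpha\to x$ in $H(S)$ and fix a neighbourhood $W$ of $x^{-1}$. First I would choose neighbourhoods $V_1\ni e$ and $V_2\ni x^{-1}$ with $V_1V_2\subseteq W$ (continuity of the multiplication), and then, using continuity of the inversion at $e$, a neighbourhood $V_0\ni e$ with $\operatorname{inv}(V_0\cap H(S))\subseteq V_1$. Topological periodicity supplies an exponent $k$ with $x^{k-1}\in V_0$ and $x^{k-2}\in V_2$. Since the $(k-1)$-st and $(k-2)$-nd power maps are continuous, $y_\alpha^{k-1}\to x^{k-1}\in V_0$ and $y_\alpha^{k-2}\to x^{k-2}\in V_2$, so eventually $y_\alpha^{k-1}\in V_0\cap H(S)$ and $y_\alpha^{k-2}\in V_2$. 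Inverting the first membership gives $y_\alpha^{-(k-1)}\in V_1$, and the identity $y_\alpha^{-1}=y_\alpha^{-(k-1)}\,y_\alpha^{k-2}$, valid in the group $H(y_\alpha y_\alpha^{-1})$, then places $y_\alpha^{-1}\in V_1V_2\subseteq W$ eventually. As $W$ was arbitrary, $y_\alpha^{-1}\to x^{-1}$. The point of this route is that only powers of the single element $y_\alpha$ are ever inverted or multiplied, so one never leaves a single maximal subgroup.

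The converse, ``continuous at $x$ $\Rightarrow$ continuous at $e$'', is where I expect the real difficulty, and I would attack it by contraposition. If the inversion fails to be continuous at $e$, there is a net $y_\alpha\to e$ and a neighbourhood $W_0\ni e$ with $y_\alpha^{-1}\notin W_0$; I would multiply by fixed powers of $x$ and analyse the cluster points of $y_\alpha^{-1}$, aiming to show, with the help of topological periodicity, that every such cluster point must coincide with $e$, contradicting $y_\alpha^{-1}\notin W_0$. The genuine obstacle here is that $H(S)$ is only the union of the maximal subgroups and need not be closed under multiplication: a net converging to $e$ may be spread across many different subgroups $H(f_\alpha)$, and for such elements neither the reversal law $(ab)^{-1}=b^{-1}a^{-1}$ nor the interpretation of left and right translations as homeomorphisms is available. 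Unlike the forward implication, one cannot manufacture a test net converging to $x$ out of the powers of $y_\alpha$ (these converge to $e$), so the cross-subgroup bookkeeping cannot be dodged and must instead be controlled by pinning the stray idempotents $f_\alpha=y_\alpha y_\alpha^{-1}$ to $e$ using the continuity of the fixed translations together with the periodicity of $x$. Making this pinning rigorous is, to my mind, the crux of the proposition.
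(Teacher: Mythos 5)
Your argument for the implication ``continuous at $e$ $\Rightarrow$ continuous at $x$'' is correct and is essentially the paper's own proof, rewritten in the language of nets. The paper chooses neighbourhoods with $V(x^{-1})\cdot V(e)\subseteq U(x^{-1})$ and $\bigl(W(e)\bigr)^{-1}\subseteq V(e)$, uses continuity of the translations $z\mapsto z\cdot x^{-1}$ and $z\mapsto x^{-1}\cdot z\cdot x^{-1}$ together with topological periodicity to find one exponent for which $x^{n+1}\in W(e)$ and $x^{n}\in V(x^{-1})$ simultaneously (your ``single $k$'' step), then passes to a neighbourhood $P(x)$ with $\bigl(H(S)\cap P(x)\bigr)^{n+1}\subseteq W(e)$ and $\bigl(H(S)\cap P(x)\bigr)^{n}\subseteq V(x^{-1})$, and concludes via the same one-subgroup identity you use, $y^{-1}=y^{n}\cdot\bigl(y^{n+1}\bigr)^{-1}$. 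Your formulation, which only ever multiplies and inverts powers of the single element $y_\alpha$, is if anything slightly cleaner: the paper's set-level inclusion formally inverts the set $\bigl(H(S)\cap P(x)\bigr)^{n+1}$, which need not lie in $H(S)$, and is legitimate only because it is applied to powers of one element at a time --- exactly the point you make explicit.

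Concerning the converse implication, which you identify as the crux and leave unproved: be aware that the paper does not prove it either. The paper's proof assumes continuity at $e$, derives continuity at $x$, and stops; the ``only if'' half of the stated equivalence is never addressed, and it cannot be extracted from the proved half, since applying that half to the topologically periodic element $e\in H(e)$ is vacuous. Your diagnosis of the obstruction --- a net $y_\alpha\to e$ may pass through infinitely many distinct maximal subgroups $H(f_\alpha)$, where neither the reversal law $(ab)^{-1}=b^{-1}a^{-1}$ nor cross-subgroup translation arguments are available --- is exactly right. Note that nothing downstream in the paper is affected: Theorem~\ref{th3} and Corollary~\ref{cor2-2} invoke only the direction that is actually proved. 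So your proposal covers precisely the ground the paper's proof covers, and the gap you flag is a gap in the paper's own proof of the ``if and only if'' statement, not a defect peculiar to your attempt.
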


\begin{proof}
We follow the argument of \cite{BanakhGutik2004}. Let $U(x^{-1})$
be any open neighbourhood of the inverse element $x^{-1}$ of $x$
in $S$. Since the semigroup operation in $S$ is continuous there
exist open neighbourhoods $V(x^{-1})$ and $V(e)$ of $x^{-1}$ and
$e$ in $H(S)$, respectively, such that $V(x^{-1})\cdot
V(e)\subseteq U(x^{-1})$. Since the inversion is continuous at
idempotent $e$ there exists an open neighbourhood $W(e)$ of $e$ in
$H(S)$ such that $(W(e))^{-1}\subseteq V(e)$. 

Also, the continuity
of the semigroup operation implies that there exists an open
neighbourhood $N(x)$ of $x$ in $H(S)$ such that $x^{-1}\cdot
N(x)\cdot x^{-1}\subseteq V(x^{-1})$ and $N(x)\cdot
x^{-1}\subseteq W(e)$. The topological periodicity of $x$ implies
that there exists a positive integer $n$ such that $x^{n+2}\in
N(x)$. Then we have that
\begin{equation*}
    x^{n+1}=x^{n+2}\cdot x^{-1}\in N(x)\cdot x^{-1}\subseteq W(e)
\end{equation*}
and
\begin{equation*}
    x^n=x^{-1}\cdot x^{n+2}\cdot x^{-1}\in x^{-1}\cdot N(x)\cdot
    x^{-1}\subseteq V(x^{-1}).
\end{equation*}
Since $S$ is a topological semigroup there exists an open
neighbourhood $P(x)$ of $x$ in $S$ such that $\big(H(S)\cap
P(x)\big)^{n+1}\subseteq W(e)$ and $\big(H(S)\cap
P(x)\big)^{n}\subseteq V(x^{-1})$. Therefore we get
\begin{equation*}
\begin{split}
    \big(H(S)\cap P(x)\big)^{-1}&\subseteq \big(H(S)\cap
    P(x)\big)^{n}\cdot
    \Big(\big(H(S)\cap P(x)\big)^{n+1}\Big)^{-1}\subseteq V(x^{-1})\cdot
    \big(W(e)\big)^{-1}\subseteq\\
    &\subseteq V(x^{-1})\cdot V(e)\subseteq
    U(x^{-1}),
\end{split}
\end{equation*}
and hence the inversion is continuous at the point $x$.
\end{proof}

Proposition~\ref{pr2-1} implies the following:

\begin{corollary}\label{cor2-2}
The inversion in a topologically periodic Clifford topological
semigroup $S$ is continuous if and only if it is continuous at any
idempotent of the semigroup $S$.
\end{corollary}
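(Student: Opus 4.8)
The plan is to derive this corollary directly from Proposition~\ref{pr2-1}, exploiting the fact that the Clifford and topological-periodicity hypotheses together guarantee that \emph{every} point of $S$ satisfies the hypotheses of that proposition. The forward implication requires no work: if the inversion $\operatorname{inv}\colon S\rightarrow S$ is continuous everywhere, then in particular it is continuous at each idempotent $e\in E(S)$.

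For the converse I would first record two structural observations. Since $S$ is a Clifford semigroup, we have $S=\bigcup\{H(e)\mid e\in E(S)\}=H(S)$, so every element $x\in S$ lies in some maximal subgroup $H(e)$, and the inversion on $S$ is exactly the map $\operatorname{inv}\colon H(S)\rightarrow H(S)$ appearing in Proposition~\ref{pr2-1}. Moreover, because $S$ is topologically periodic, each such $x$ is a topologically periodic element of its maximal subgroup $H(e)$. Thus the full hypothesis of Proposition~\ref{pr2-1} is available at every point of $S$.

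With this in place the argument is a pointwise application. Assume the inversion is continuous at every idempotent of $S$, and fix an arbitrary $x\in S$, say $x\in H(e)$. Since the inversion is continuous at the idempotent $e$, Proposition~\ref{pr2-1} (whose hypotheses hold for $x$ by the preceding paragraph) shows that the inversion is continuous at $x$ as well. As $x$ ranges over all of $S=H(S)$, the inversion is continuous at every point, and hence continuous.

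I do not anticipate a genuine obstacle here, since the corollary is essentially a quantified restatement of Proposition~\ref{pr2-1}. The only point demanding any care is verifying that the global map ``$\operatorname{inv}$ on $S$'' coincides with the map ``$\operatorname{inv}$ on $H(S)$'' to which the proposition refers; this is precisely what the Clifford hypothesis supplies, via the identity $S=H(S)$.
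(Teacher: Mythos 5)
Your proposal is correct and follows exactly the route the paper intends: the paper states Corollary~\ref{cor2-2} as an immediate consequence of Proposition~\ref{pr2-1}, and your argument simply makes explicit the two facts the paper leaves implicit, namely that the Clifford hypothesis gives $S=H(S)$ so every point lies in some $H(e)$, and that topological periodicity of $S$ lets Proposition~\ref{pr2-1} be applied pointwise at each $x\in H(e)$ once the inversion is known to be continuous at $e$.
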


\begin{theorem}\label{th3}
Let $S$ be a regular topological semigroup with the countably
compact square $S\times S$. Then:
\begin{itemize}
    \item[$(i)$] the inversion $\operatorname{inv}\colon
                 H(S)\rightarrow H(S)$ is continuous; and
    \item[$(ii)$] the projection $\pi\colon
                 H(S)\rightarrow E(S)$ is continuous.
\end{itemize}
\end{theorem}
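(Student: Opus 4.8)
The plan is to reduce the continuity of $\operatorname{inv}$ on all of $H(S)$ to its continuity at the idempotents, and then to settle the idempotent case by a cluster-point argument inside the countably compact square. First I would invoke Theorem~\ref{th2}: each maximal subgroup $H(e)$ is a countably compact topological group, and the argument establishing this shows in passing that every element of $H(e)$ is topologically periodic. Hence every point of $H(S)$ is a topologically periodic element of some maximal subgroup, so Proposition~\ref{pr2-1} is available at each point and tells us that $\operatorname{inv}\colon H(S)\to H(S)$ is continuous at $x\in H(e)$ if and only if it is continuous at $e$. Thus it suffices to prove continuity of $\operatorname{inv}$ at every idempotent $e\in E(S)$. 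Granting $(i)$, assertion $(ii)$ is then immediate, exactly as in Theorem~\ref{th1}$(ii)$: the map $x\mapsto(x,x^{-1})$ is continuous once $\operatorname{inv}$ is, and $\pi$ is its composition with the continuous multiplication.

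For continuity at a fixed idempotent $e$ I would argue by contradiction. Assuming $\operatorname{inv}$ is discontinuous at $e$, I would use the regularity of $S$ to fix an open neighbourhood $U(e)$ whose closure still misses the relevant inverses, and to produce a sequence $\{x_n\}\subseteq H(S)$ with $x_n\to e$ while $x_n^{-1}\notin U(e)$ for every $n$. Writing $f_n=x_nx_n^{-1}=x_n^{-1}x_n$ for the local identity of the maximal subgroup containing $x_n$, I would apply the countable compactness of $S\times S$ to the sequence $\{(x_n,x_n^{-1})\}$ to obtain a cluster point $(a,b)$. Since $x_n\to e$ and $S$ is Hausdorff, necessarily $a=e$; and since each $x_n^{-1}$ lies in the closed set $S\setminus U(e)$, the cluster point satisfies $b\notin U(e)$, so in particular $b\neq e$.

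The contradiction would come from passing the defining identities to the limit along a subnet realizing $(x_n,x_n^{-1})\to(e,b)$. Continuity of the semigroup operation applied to $f_n=x_nx_n^{-1}=x_n^{-1}x_n$ yields $eb=be=:f$ with $f$ idempotent (a limit of the idempotents $f_n$); applied to $x_nf_n=f_nx_n=x_n$ it yields $ef=fe=e$; and applied to $x_n^{-1}f_n=f_nx_n^{-1}=x_n^{-1}$ it yields $fb=bf=b$. Now $f=eb$ together with $e^2=e$ gives $ef=e(eb)=eb=f$, while $ef=e$, so $f=e$; substituting $f=e$ into $fb=b$ gives $b=eb=f=e$, contradicting $b\neq e$. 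Hence $\operatorname{inv}$ is continuous at $e$, which completes $(i)$ and therefore $(ii)$.

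The step I expect to be the main obstacle is the production of the witnessing sequence $\{x_n\}$. Discontinuity of $\operatorname{inv}$ at $e$ is a priori only a statement about neighbourhood filters, i.e.\ about nets, whereas countable compactness provides cluster points of \emph{sequences} and not of arbitrary nets. The role of the regularity hypothesis is, I expect, precisely to turn the neighbourhood failure at $e$ into a genuine sequence lying in the closed set $S\setminus\overline{U(e)}$, so that the cluster-point analysis in the countably compact square $S\times S$ applies; confirming that this passage from nets to sequences is legitimate in the present regular, countably compact setting is the delicate point, after which the limit computation above is entirely routine.
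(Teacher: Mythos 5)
Your reduction to continuity at idempotents (via Theorem~\ref{th2} and Proposition~\ref{pr2-1}) and your treatment of $(ii)$ coincide with the paper's. However, the step you yourself flag as the delicate point is a genuine gap, and it does not close in the way you hope. Discontinuity of $\operatorname{inv}$ at $e$ is a statement about the neighbourhood filter of $e$: for some $U(e)$, \emph{every} neighbourhood $V$ of $e$ contains a point $x\in V\cap H(S)$ with $x^{-1}\notin U(e)$. To convert this into a sequence $x_n\to e$ you would need a countable neighbourhood base (or sequentiality) at $e$, and neither regularity nor countable compactness supplies that: a regular countably compact space need not be first countable, and points chosen from a countable decreasing family of neighbourhoods of $e$ need not converge to $e$. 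Since your limit computation ($f=eb$, $ef=e$, $fb=b$, hence $b=e$) uses the equality $a=e$ in an essential way, the contradiction collapses as soon as the cluster point $a$ is allowed to differ from $e$. So the proposal is incomplete precisely at its crux, and the missing step is not a technicality that regularity can repair in the way you suggest.

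The paper's proof shows how to avoid ever needing $x_n\to e$. Regularity together with continuity of multiplication is used to build a nested sequence of neighbourhoods $U_n(e)$ satisfying $\overline{\big(U_n(e)\big)^m}\subseteq U_{n-1}(e)$ for $m=1,\dots,n$, and one sets $F=\bigcap_{n}\overline{U_n(e)}$. If continuity at $e$ fails, one chooses $x_n\in U_n(e)\cap H(S)$ with $x_n^{-1}\notin U(e)$; a cluster point $(a,b)$ of $\big\{(x_n,x_n^{-1})\big\}$ in the countably compact square then satisfies $a\in F$ (but not necessarily $a=e$ --- this is the whole point), and the limit identities show $a,b$ lie in a common maximal subgroup with $b=a^{-1}$. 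The multiplicative nesting of the $U_n(e)$, combined with the topological periodicity of elements of $H(S)$ coming from Theorem~\ref{th2}$(i)$, yields the key inclusion $\big(F\cap H(S)\big)^{-1}\subseteq F$; since $F\subseteq\overline{U_1(e)}\subseteq U(e)$, this forces $b=a^{-1}\in U(e)$, and since $(a,b)$ is a cluster point of the sequence, some $x_n^{-1}$ must lie in $U(e)$, contradicting the choice of the $x_n$. In short: regularity is not used to pass from nets to sequences (which is impossible here); it is used to manufacture an inversion-invariant ``trap'' $F$ inside $U(e)$ that catches the cluster point wherever it sits. Your cluster-point algebra is correct as far as it goes, but it must be replaced by this $F$-invariance argument to obtain a proof.
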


\begin{proof}
$(i)$ By Proposition~\ref{pr2-1} it is sufficient to show that the
inversion $\operatorname{inv}\colon H(S)\rightarrow H(S)$ is
continuous at any point of the set $E(S)$.

Fix any $e\in E(S)$. Let $U(e)$ be any open neighbourhood of $e$
in $S$. Since the topological space of the semigroup $S$ is
regular, the continuity of the semigroup operation of $S$ implies
that there exists a sequence of open neighbourhoods
$\{U_i(e)\}_{i=1}^\infty$ of the idempotent $e$ in $S$ such that
$\overline{U_1(e)}\subseteq U(e)$ and
$\overline{\big(U_n(e)\big)^m}\subseteq U_{n-1}(e)$ for any
positive integer $n$ and all $m=1,\ldots,n$. 
Let
$F=\displaystyle\bigcap_{n=1}^\infty\overline{U_n(e)}$. 

We shall
show that $\big(F\cap H(S)\big)^{-1}\subseteq F$. Let $x$ be any
element of the set $F\cap H(S)$. Since the set $F$ is closed to
prove that $x^{-1}\in F$ it sufficient to show that $V(x^{-1})\cap
F\neq\varnothing$ for any open neighbourhood $V(x^{-1})$ of the
point $x^{-1}$. The continuity of the semigroup operation in $S$
and the equality $x^{-1}=x^{-1}\cdot x\cdot x^{-1}$ imply that
there exists an open neighbourhood $V(x)$ of the point $x$ in $S$
such that $x^{-1}\cdot V(x)\cdot x^{-1}\subseteq V(x^{-1})$. By
Theorem~\ref{th2}~$(i)$ the element $x$ of $S$ is topologically
periodic, and hence there exists a positive integer $n\geqslant 2$
such that $x^n\in V(x)$. Then we have
\begin{equation*}
    x^{n-2}=x^{-1}\cdot x^n\cdot x^{-1}\in x^{-1}\cdot V(x)\cdot
    x^{-1}\subseteq V(x^{-1})
\end{equation*}
and
\begin{equation*}
    x^{n-2}\in F^{n-2}\subseteq\bigcap_{i=n-2}^\infty
    \big(U_i(e)\big)^{n-2}\subseteq \bigcap_{i=n-2}^\infty
    U_{i-1}(e)\subseteq F.
\end{equation*}
Hence $V(x^{-1})\cap F\neq\varnothing$ and since  $F$ is a closed
subset in $S$ we have that $x^{-1}\in F$. This implies that the
inclusion $\big(F\cap H(S)\big)^{-1}\subseteq F$ holds.

Later we shall show that $\big(U_n(e)\cap H(S)\big)^{-1}\subseteq
U(e)\cap H(S)$ for some positive integer $n$. Suppose to the
contrary that $\big(U_n(e)\cap H(S)\big)^{-1}\nsubseteq U(e)\cap
H(S)$ for any positive integer $n$. Then there exists a sequence
$\{x_n\}_{n=1}^\infty$ in $H(S)$ such that $x_n\in
U_n(e)\setminus\big(U(e)\big)^{-1}$ for all positive integers $n$.

The countable compactness of the square $S\times S$ implies that
the sequence $\{(x_n,x_n^{-1})\}_{n=1}^\infty$ has a cluster point
$(a,b)$ in $S\times S$. The continuity of the semigroup operation
in $S$ implies that
\begin{equation*}
  a\cdot b=b\cdot a=f,\;  a\cdot b\cdot a=a,\; b\cdot a\cdot b=b, \;
\end{equation*}
and hence $a, b\in H(f)$ for some idempotent $f$ in $S$. Therefore
$b=a^{-1}\in F^{-1}\cap H(S)\subseteq F$. Then $(a,b)\in F\times
F\subseteq U(e)\times U(e)$. Since $(a,b)$ is a cluster point of
the sequence $\{(x_n,x_n^{-1})\}_{n=1}^\infty$, there exists a
positive integer $n$ such that $(x_n,x_n^{-1})\in U(e)\times
U(e)$. 

Therefore we have that $x_n\in \big(U(e)\big)^{-1}$ which
contradicts the choice of the sequence $\{x_n\}_{n=1}^\infty$. The
obtained contradiction implies that $\big(U_n(e)\cap
H(S)\big)^{-1}\subseteq U(e)\cap H(S)$ for some positive integer
$n$, and hence the inversion $\operatorname{inv}\colon
H(S)\rightarrow H(S)$ is continuous.

\smallskip
$(ii)$ The projection $\pi\colon H(S)\rightarrow E(S)$ is
continuous as a composition of two continuous maps.
\end{proof}

Theorem~\ref{th3} implies the following corollary generalizing a
result of~\cite{BanakhGutik2004}.

\begin{corollary}\label{cor3-1}
The inversion in a regular Clifford topological semigroup with the
countably compact square is continuous.
\end{corollary}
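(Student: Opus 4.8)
The plan is to derive Corollary~\ref{cor3-1} directly from Theorem~\ref{th3} by observing that a Clifford semigroup is precisely the case where the abstract machinery of the theorem applies to the whole space. First I would note that if $S$ is a Clifford topological semigroup, then by definition $S=\bigcup_{e\in E(S)}H(e)$ is a union of groups, so every element of $S$ lies in some maximal subgroup; this means $H(S)=S$. Consequently the inversion defined on all of $S$ coincides with the inversion $\operatorname{inv}\colon H(S)\to H(S)$ considered in Theorem~\ref{th3}.

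Next I would invoke the hypotheses: $S$ is regular and has countably compact square $S\times S$, which are exactly the standing assumptions of Theorem~\ref{th3}. Applying part~$(i)$ of that theorem, the inversion $\operatorname{inv}\colon H(S)\to H(S)$ is continuous. Since $H(S)=S$, this says the inversion on $S$ is continuous, which is the conclusion.

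I expect there to be no real obstacle here, since the corollary is a pure specialization: the substantive work — establishing topological periodicity of elements of $H(S)$ via Theorem~\ref{th2}, the closed-set argument producing the descending family $\{U_n(e)\}$, and the reduction to continuity at idempotents through Proposition~\ref{pr2-1} — has already been carried out in the proof of Theorem~\ref{th3}. The only point worth stating explicitly is the identification $H(S)=S$ for Clifford semigroups, which follows immediately from the definition of ``Clifford'' recalled in the introduction, where $H(S)$ is exactly the union of all maximal subgroups. The reference to \cite{BanakhGutik2004} indicates that the continuity-at-idempotents technique generalizes an earlier result, so the corollary records that the regularity plus countably-compact-square hypothesis suffices in the Clifford setting.
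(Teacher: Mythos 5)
Your proof is correct and is essentially the paper's own argument: the paper simply states that the corollary follows from Theorem~\ref{th3}, and the implicit step is exactly the identification $H(S)=S$ for a Clifford semigroup, after which Theorem~\ref{th3}$(i)$ gives continuity of the inversion on all of $S$. Nothing further is needed.
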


Let $S$ be a topological semigroup and $e\in E(S)$. We shall say
that the semigroup $S$ is \emph{inversely regular at} $e$ if for
any open neighbourhood $U(e)$ of $e$ there exists an open
neighbourhood $W(e)$ of $e$ such that $\overline{\big(W(e)\cap
H(S)\big)^{-1}}\subseteq \big(U(e)\cap H(S)\big)^{-1}$. A
topological semigroup $S$ with non-empty subsets of idempotents is
called \emph{inversely regular} if it is inversely regular at each
idempotent of $S$~\cite{BanakhGutik2004}.

\begin{theorem}\label{th4}
Let $S$ be a topologically periodic Hausdorff topological
semigroup. If $S$ is inversely regular and countably compact,
then:
\begin{itemize}
    \item[$(i)$] the inversion $\operatorname{inv}\colon
                 H(S)\rightarrow H(S)$ is continuous;
    \item[$(ii)$] the projection $\pi\colon
                 H(S)\rightarrow E(S)$ is continuous.
\end{itemize}
\end{theorem}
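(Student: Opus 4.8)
The plan is to reduce the whole statement to the continuity of the inversion at the idempotents, and then to run a proof by contradiction that uses inverse regularity to build a nested neighbourhood sequence, the countable compactness of $S$ (rather than of the square $S\times S$) to extract a single cluster point, and the topological periodicity to recover the inverse from high powers.

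\textbf{Reduction.} First I would note that, since $S$ is topologically periodic, every element of every maximal subgroup $H(e)$ is a topologically periodic element of $S$; hence Proposition~\ref{pr2-1} applies at each point and shows that $\operatorname{inv}\colon H(S)\to H(S)$ is continuous at a point $x\in H(e)$ if and only if it is continuous at the idempotent $e$. Consequently it suffices to prove that the inversion is continuous at every $e\in E(S)$. Granting $(i)$, part $(ii)$ then follows exactly as in Theorems~\ref{th1} and~\ref{th3}: the projection $\pi$ is the composition $x\mapsto xx^{-1}$ of the (now continuous) inversion with the continuous multiplication.

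\textbf{Construction and set-up.} Fix $e\in E(S)$ and an open neighbourhood $U(e)$. Using inverse regularity at $e$ together with the continuity of the semigroup operation, I would construct a decreasing sequence $\{U_n(e)\}_{n=1}^\infty$ of open neighbourhoods of $e$ such that, for every $n$,
\begin{equation*}
\overline{\big(U_{n+1}(e)\cap H(S)\big)^{-1}}\subseteq\big(U_n(e)\cap H(S)\big)^{-1}
\qquad\text{and}\qquad
\big(U_n(e)\big)^{n}\subseteq U(e).
\end{equation*}
At each step the neighbourhood produced by inverse regularity is intersected with one supplied by continuity of multiplication (controlling all powers up to order $n$) and with the previous $U_n(e)$; shrinking preserves the closure clause, so both properties hold simultaneously. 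Suppose now, toward a contradiction, that the inversion is \emph{not} continuous at $e$. Then no neighbourhood works for $U(e)$, so for each $n$ there is $x_n\in U_n(e)\cap H(S)$ with $x_n^{-1}\notin U(e)$.

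\textbf{Extracting and analysing the cluster point.} By countable compactness of $S$ the sequence $\{x_n^{-1}\}_{n=1}^\infty$ has a cluster point $b$, and since the tail $\{x_k^{-1}:k\ge n\}$ lies in $\big(U_n(e)\cap H(S)\big)^{-1}$, the closure clause gives $b\in\bigcap_n\overline{(U_{n+1}(e)\cap H(S))^{-1}}\subseteq\bigcap_n\big(U_n(e)\cap H(S)\big)^{-1}$; in particular $b\in H(S)$, the complement of $U(e)$ being closed forces $b\notin U(e)$, and the continuity of the operation makes $c:=b^{-1}$ lie in $\bigcap_n U_n(e)$ and be the inverse of $b$ in a common subgroup. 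Because $c\in U_k(e)$ for every $k$, the power clause yields $c^{k}\in\big(U_k(e)\big)^{k}\subseteq U(e)$ for all $k$. I would then apply Remark~\ref{rem1-2} to the topologically periodic element $c$ to approximate $b=c^{-1}$ by powers $c^{m}$ with $m$ arbitrarily large (via the estimate $c^{m-2}=c^{-1}c^{m}c^{-1}$ when $c^{m}$ is close to $c$), so that $b\in\overline{\{c^{m}\}}\subseteq\overline{U(e)}$, and finally invoke the inverse-regularity closure clause, which places $b\in\big(U(e)\cap H(S)\big)^{-1}$, to conclude $b\in U(e)$ — contradicting $b\notin U(e)$.

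\textbf{Main obstacle.} The delicate point is precisely this last step. Having only $S$, and not the square $S\times S$, countably compact, I can track the single sequence $\{x_n^{-1}\}$ but not the pair $\{(x_n,x_n^{-1})\}$ as was done in Theorem~\ref{th3}; the topological periodicity is what repairs this, letting the inverse $b=c^{-1}$ be recovered from the powers $c^{m}$, which the construction keeps inside $U(e)$. The hard part will be coordinating the two clauses of the construction so that this boundary point $b$ is genuinely captured by the \emph{open} set $U(e)$ and not merely by its closure: the closure appearing in the definition of inverse regularity is exactly what must be exploited to upgrade ``$b$ lies in the closure of a subset of $U(e)$'' to ``$b\in U(e)$'', and verifying that this upgrade is legitimate is the crux of the proof.
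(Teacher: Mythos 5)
Your reduction via Proposition~\ref{pr2-1}, the nested-neighbourhood set-up, the extraction of the cluster point $b$ of $\{x_n^{-1}\}$ using countable compactness of $S$ alone, and the identification $b=c^{-1}$ with $c\in\bigcap_n U_n(e)$ (which is legitimate, since maximal subgroups are pairwise disjoint and group inverses unique) all match the spirit of the paper's argument. But the final step, which you yourself flag as the crux, contains a genuine gap, and it is fatal in two ways. First, your power clause $\big(U_n(e)\big)^n\subseteq U(e)$ steers the powers $c^k$ into $U(e)$, so the periodicity approximation only yields $b\in\overline{U(e)}$; without regularity (exactly the hypothesis that Theorem~\ref{th3} has and Theorem~\ref{th4} drops) there is no passage from $\overline{U(e)}$ to $U(e)$. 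Second, the proposed rescue is a non sequitur: your closure clause pairs $U_{n+1}$ only with $U_n$, so it yields $b\in\bigcap_n\big(U_n(e)\cap H(S)\big)^{-1}$ --- i.e.\ exactly $b=c^{-1}$, which you already have --- and never places $b$ in a closure paired with $U(e)$ itself; and even if one granted $b\in\big(U(e)\cap H(S)\big)^{-1}$, that set consists of \emph{inverses} of elements of $U(e)$ and need not lie in $U(e)$. Indeed, the inclusion $\big(U(e)\cap H(S)\big)^{-1}\subseteq U(e)$ for a neighbourhood base at $e$ is precisely the continuity of inversion at $e$ that you are trying to prove, and it fails, for instance, in Example~\ref{ex1}.

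The paper avoids this by steering the powers not into $U(e)$ but into the sets that inverse regularity controls. It builds two interleaved sequences $\{U_n(e)\}$ and $\{W_n(e)\}$ with $\big(U_n(e)\big)^i\subseteq W_{n-1}(e)$ for $i=1,\dots,n$ and $\overline{\big(W_n(e)\cap H(S)\big)^{-1}}\subseteq\big(U_n(e)\cap H(S)\big)^{-1}$, and sets $F=\bigcap_{n}\overline{\big(W_n(e)\cap H(S)\big)^{-1}}$, a closed set which the chain of inclusions shows equals $\bigcap_n\big(W_n(e)\cap H(S)\big)^{-1}$. For $x\in F$ one has $x=y^{-1}$ with $y\in U_k(e)\cap H(S)$ for every $k$, hence $x^{m}=\big(y^{m}\big)^{-1}\in\big(W_{k-1}(e)\cap H(S)\big)^{-1}$ for $k\geqslant m$, so the powers of $x$ stay \emph{inside} $F$; topological periodicity approximates $x^{-1}$ by such powers (via $x^{m-2}=x^{-1}x^{m}x^{-1}$), and closedness of $F$ gives $F^{-1}=F$. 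This symmetry is the missing idea: it converts inverse-side membership into direct membership, $F=F^{-1}\subseteq\bigcap_n\big(U_n(e)\cap H(S)\big)\subseteq U(e)$. The paper then finishes without any contradiction or cluster point: the sets $\overline{\big(W_n(e)\cap H(S)\big)^{-1}}$ form an essentially decreasing sequence of closed sets whose intersection lies in the open set $U(e)$, so countable compactness gives a single $n$ with $\big(W_n(e)\cap H(S)\big)^{-1}\subseteq\overline{\big(W_n(e)\cap H(S)\big)^{-1}}\subseteq U(e)$, which is continuity at $e$. Your contradiction framework could be made to work on top of this machinery (your $b$ would lie in $F\subseteq U(e)$, contradicting $b\notin U(e)$), but only after replacing your one-sequence construction by the two-sequence one; as written, the argument stalls at $b\in\overline{U(e)}$.
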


\begin{proof}
$(i)$ Fix any idempotent $e$ in $S$. Let $U(e)$ be any open
neighbourhood of $e$ in $S$. Since the semigroup operation in $S$
is continuous and $S$ is inversely regular we construct
inductively two sequences $\{U_n(e)\}_{i=1}^\infty$ and
$\{W_n(e)\}_{i=1}^\infty$ of open neighbourhoods of the idempotent
$e$ such that $\big(U_n(e)\big)^i\subseteq W_{n-1}(e)$ and
$\overline{\big(W_n(e)\cap H(S)\big)^{-1}}\subseteq
\big(U_n(e)\cap H(S)\big)^{-1}$ for all positive integers $n$ and
$i=1,2,\ldots,n$.

Let $F=\displaystyle\bigcap_{n=1}^\infty\overline{\big(W_n(e)\cap
H(S)\big)^{-1}}$. Then we have that
\begin{equation*}
    F=\bigcap_{n=1}^\infty\overline{\big(W_n(e)\cap
    H(S)\big)^{-1}}\subseteq \bigcap_{n=2}^\infty \big(U_n(e)\cap
    H(S)\big)^{-1}\subseteq \bigcap_{n=2}^\infty
    \big(W_{n-1}(e)\cap H(S)\big)^{-1}\subseteq F.
\end{equation*}
We shall show that $F^{-1}=F$. Let $x$ be arbitrary element of
$F$. Since the set $F$ is closed it sufficient to prove that
$V(x^{-1})\cap F\neq\varnothing$ for any open neighbourhood
$V(x^{-1})$ of the point $x^{-1}$. Since the semigroup $S$ is
topologically periodic there exists a positive integer $n\geqslant
2$ such that $x^{n-2}\in V(x^{-1})$ (see proof of
Theorem~\ref{th3}). Then we have
\begin{equation*}
\begin{split}
    x^{n-2}\in F^{n-2}&
    \subseteq \bigcap_{k=n-2}^\infty \Big(\big(U_k(e)\cap
    H(S)\big)^{-1}\Big)^{n-2}
    \subseteq \bigcap_{k=n-2}^\infty \Big(\big(U_k(e)\cap
    H(S)\big)^{n-2}\Big)^{-1}\subseteq\\
    &\subseteq \bigcap_{k=n-2}^\infty
    \big(W_{k-1}(e)\cap H(S)\big)^{-1}\subseteq F.
\end{split}
\end{equation*}
Hence $V(x^{-1})\cap F\neq\varnothing$ and since  $F$ is a closed
subset in $S$ we have that $x^{-1}\in F$. This implies that the
inclusion $F^{-1}\subseteq F$ holds. Then after the inversion we
get that $F\subseteq F^{-1}$. Therefore we get that
\begin{equation*}
    \bigcap_{n=1}^\infty\big(W_{n}(e)\cap H(S)\big)^{-1}=F=F^{-1}=
    \bigcap_{n=1}^\infty\Big(\overline{\big(W_n(e)\cap
    H(S)\big)^{-1}}\Big)^{-1}\subseteq \bigcap_{n=1}^\infty \big(U_n(e)\cap
    H(S)\big)\subseteq U(e).
\end{equation*}
Since the space of the semigroup $S$ is countably compact there
exists a positive integer $n$ such that
\begin{equation*}
 F\subseteq\big(W_n(e)\cap H(S)\big)^{-1}\subseteq
 \overline{\big(W_n(e)\cap H(S)\big)^{-1}}\subseteq U(e).
\end{equation*}
This implies that the inversion is continuous at the idempotent
$e$.

\smallskip
$(ii)$ The projection $\pi\colon H(S)\rightarrow E(S)$ is
continuous as a composition of two continuous maps.
\end{proof}

We recall that a map $f\colon X\rightarrow Y$ between topological
space is called \emph{sequentially continuous} if
$\displaystyle\lim_{n\to\infty} f(x_n)=f\big(\lim_{n\to\infty}
x_n\big)$ for any convergent sequence $\{ x_n\}_{n=1}^\infty$ in
$X$. Obviously,
a composition of two sequentially continuous
maps is a continuous map. A subset $F$ of a topological space $X$
is called \emph{sequentially closed} if no sequence in $F$
converges to a point not in $F$~\cite{Franklin1965}.

\begin{theorem}\label{th5}
Let $S$ be a Hausdorff countably compact topological semigroup.
Then the following conditions hold:
\begin{itemize}
    \item[$(i)$] each maximal subgroup $H(e)$, $e\in E(S)$, is
                 sequentially closed in $S$;
    \item[$(ii)$] the subset $H(S)$ is sequentially closed in
                 $S$;
    \item[$(iii)$] the inversion $\operatorname{inv}\colon
                 H(S)\rightarrow H(S)$ is sequentially
                 continuous; and
    \item[$(iv)$] the projection $\pi\colon
                 H(S)\rightarrow E(S)$ is sequentially
                 continuous.
\end{itemize}
\end{theorem}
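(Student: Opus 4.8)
The plan is to derive all four conclusions from a single cluster-point argument, taking care that we are given only $S$ (and not $S\times S$) countably compact. In each part I would start from a sequence $\{x_n\}_{n=1}^\infty\subseteq H(S)$ (for $(i)$, lying in a fixed $H(e)$) with $x_n\to x$ in $S$, and let $x_n^{-1}$ be the group inverse of $x_n$ in its maximal subgroup, so that $x_nx_n^{-1}=x_n^{-1}x_n=e_n\in E(S)$ while $x_nx_n^{-1}x_n=x_n$ and $x_n^{-1}x_nx_n^{-1}=x_n^{-1}$. By countable compactness of $S$, the sequence $\{x_n^{-1}\}$ has a cluster point $b\in S$. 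The point of departure is that, since the first coordinate already converges, $(x,b)$ is automatically a cluster point of $\{(x_n,x_n^{-1})\}$ in $S\times S$: any neighbourhood $U\times V$ of $(x,b)$ contains $(x_n,x_n^{-1})$ for infinitely many $n$, because $x_n\in U$ eventually and $x_n^{-1}\in V$ infinitely often. This is exactly what lets us avoid assuming the square countably compact.

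Next I would read off three algebraic identities using that a continuous map sends cluster points to cluster points and that in a Hausdorff space the unique cluster point of a convergent sequence is its limit. Applying $(u,v)\mapsto uvu$ shows that $xbx$ is a cluster point of $\{x_nx_n^{-1}x_n\}=\{x_n\}$, whence $xbx=x$. The two remaining identities $bxb=b$ and $xb=bx$ I obtain by a diagonal trick: the images of $\{(x_n,x_n^{-1})\}$ under the continuous maps $(u,v)\mapsto(v,vuv)$ and $(u,v)\mapsto(uv,vu)$ are the sequences $\{(x_n^{-1},x_n^{-1})\}$ and $\{(e_n,e_n)\}$, both lying on the diagonal $\Delta=\{(s,s)\mid s\in S\}$. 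Since $S$ is Hausdorff, $\Delta$ is closed, so the corresponding cluster points $(b,bxb)$ and $(xb,bx)$ lie on $\Delta$, forcing $bxb=b$ and $xb=bx$.

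With the relations $xbx=x$, $bxb=b$, $xb=bx$ in hand, all four conclusions follow. For $(ii)$, these are precisely the defining conditions for $b$ to witness $x\in H(S)$, so $H(S)$ is sequentially closed. For $(i)$, when all $x_n$ lie in a fixed $H(e)$ one has $e_n=e$, hence $xe=ex=x$ and $xb=bx=e$, placing $x$ in $H(e)$. For $(iii)$, the same relations say that $b$ is a commuting inverse of $x$, and since the inverse in a maximal subgroup is unique we get $b=x^{-1}$; thus $x^{-1}$ is the \emph{unique} cluster point of $\{x_n^{-1}\}$, and in a countably compact space a sequence with a unique cluster point converges to it (a closed subset avoiding a neighbourhood would otherwise produce a second cluster point), so $x_n^{-1}\to x^{-1}$ and $\operatorname{inv}$ is sequentially continuous. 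Finally $(iv)$ is immediate, since $\pi=\mu\circ(\operatorname{id},\operatorname{inv})$ is a composition of sequentially continuous maps.

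The step I expect to be the main obstacle is pinning the cluster point $b$ down to the actual inverse $x^{-1}$. The identity $xbx=x$ coming from the convergent coordinate only says that $x$ is regular and does not locate $b$ at all; a priori $b$ could differ from $x^{-1}$ by an element annihilated on both sides by the relevant idempotent. The decisive input is Hausdorffness, entering through the closedness of $\Delta$, which upgrades the single relation to the full system $bxb=b$, $xb=bx$ and thereby forces $b=x^{-1}$. Once this is secured, parts $(iii)$ and $(iv)$ reduce to the elementary facts that unique cluster points are limits in countably compact spaces and that sequential continuity is preserved under composition.
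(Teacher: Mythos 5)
Your proof is correct, but it reaches the conclusions by partly different means than the paper, and the differences are worth recording. For $(i)$--$(ii)$ the paper forms the compactum $A=\{x_n\}_{n=1}^\infty\cup\{x\}$ and cites Corollary~3.10.14 of Engelking to make $A\times S$ countably compact, so that $\{(x_n,x_n^{-1})\}_{n=1}^\infty$ has a cluster point there; you avoid any product theorem by taking a cluster point $b$ of $\{x_n^{-1}\}_{n=1}^\infty$ in $S$ alone and noting that, because the first coordinates converge, $(x,b)$ is already a cluster point of the pair sequence in $S\times S$ --- a more elementary substitute. Your diagonal-closedness device also makes explicit the step the paper compresses into ``the continuity of the semigroup operation implies $ab=e$, $aba=a$, $bab=b$''. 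The divergence is greatest at $(iii)$: the paper fixes a countable compactum $C\subset H(S)$, shows that $(C\times S)\cap\operatorname{Gr}_{\operatorname{inv}}(S)$ is compact (closed graph, countable compactness of $C\times S$, and countability of the intersection), and concludes that both coordinate projections restricted to it are homeomorphisms, so $\operatorname{inv}|_C$ is continuous; you instead identify every cluster point of $\{x_n^{-1}\}_{n=1}^\infty$ with the group inverse $x^{-1}$ (via uniqueness of commuting inverses, which rests on the disjointness of maximal subgroups) and then invoke the fact that in a countably compact space a sequence with a unique cluster point converges to it. Both mechanisms are sound: the paper's argument yields continuity of the inversion on every countable compactum of $H(S)$, which is formally a bit more than sequential continuity, while yours is a single self-contained cluster-point argument that needs only the countable compactness of $S$ itself and delivers all four assertions uniformly.
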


\begin{proof}
$(i)$ Suppose to the contrary that there exists a maximal
subgroup $H(e)$ in $S$ which is not sequentially closed subset in
$S$. Then there exists a sequence $\{ x_n\}_{n=1}^\infty\subset
H(e)$ which converges to $x\notin H(e)$. We put $A=\{
x_n\}_{n=1}^\infty\cup\{ x\}$. Since the sequence $\{
x_n\}_{n=1}^\infty\subset H(e)$ converges to $x$ the set $A$ with
the induced from $S$ topology is a compact space. Then by
Corollary~3.10.14 from~\cite{Engelking1989}, $A\times S$ is a countably
compact space. 

The countable compactness of $A\times S$ implies
that the sequence $\{(x_n,x_n^{-1})\}_{n=1}^\infty$ has a cluster
point $(a,b)$ in $A\times S$. The continuity of the semigroup
operation in $S$ implies that $ab=e$, $aba=a$, $bab=b$ and hence
$a=b^{-1}$ and $a,b\in H(e)$. The Hausdorff property of $S$
implies that $x=a$ and hence $x\in H(e)$. The obtained
contradiction implies assertion $(i)$.

\smallskip
$(ii)$ We argue exactly as a previous case. Suppose to the
contrary that $H(S)$ is not a sequentially closed subset in $S$.
Then there exists a sequence $\{ x_n\}_{n=1}^\infty\subset H(S)$
which converges to $x\notin H(S)$. We put $A=\{
x_n\}_{n=1}^\infty\cup\{ x\}$. Since the sequence $\{
x_n\}_{n=1}^\infty\subset H(S)$ converges to $x$ the set $A$ with
the induced from $S$ topology is a compact space. Then by
Corollary~3.10.14 from~\cite{Engelking1989}, $A\times S$ is a countably
compact space. 

The countable compactness of $A\times S$ implies
that the sequence $\{(x_n,x_n^{-1})\}_{n=1}^\infty$ has a cluster
point $(a,b)$ in $A\times S$. The continuity of the semigroup
operation in $S$ implies that $ab=e$, $aba=a$, $bab=b$ for some
idempotent $e\in E(S)$ and hence $a=b^{-1}$ and $a,b\in H(e)$. The
Hausdorff property of $S$ implies that $x=a$ and hence $x\in
H(e)\subseteq H(S)$. The obtained contradiction implies assertion
$(ii)$.

\smallskip
$(iii)$ The sequential continuity of the inversion
$\operatorname{inv}\colon H(S)\to H(S)$ will follow as soon as we
prove that for any countable compactum $C\subset H(S)$ the
restriction $\operatorname{inv}|_C$ is continuous. Let
\begin{equation*}
\operatorname{Gr}_{\operatorname{inv}}(S)=\{(x, y)\in S\times
S\mid y=x^{-1}\}
\end{equation*}
be the graph of the inversion. Since $S$ is a topological
semigroup and
\begin{equation*}
\operatorname{Gr}_{\operatorname{inv}}(S)=\{(x, y)\in S\times
S\mid xyx=x, yxy=y\mbox{ and } xy=yx\},
\end{equation*}
the graph $\operatorname{Gr}_{\operatorname{inv}}(S)$ is a closed
subset of $S\times S$.

Since $C$ is a metrizable compactum we can apply
Corollary~3.10.14 from~\cite{Engelking1989} to conclude that $C\times
S$ is a countably compact space. Then the closedness of
$\operatorname{Gr}_{\operatorname{inv}}(S)$ in the space $S\times
S$ implies that the space $G=(C\times S)\cap
\operatorname{Gr}_{\operatorname{inv}}(S)$ is countably compact
and being countable, is compact.

Consider the natural projections $\operatorname{pr}_1\colon
S\times S\to S$ and $\operatorname{pr}_2\colon S\times S\to S$
onto the first and the second coordinates, respectively. It
follows from the compactness of $G$ that
$\operatorname{pr}_1\colon G\to C$ and $\operatorname{pr}_2\colon
G\to C^{-1}$ are homeomorphisms. Consequently,
$\operatorname{inv}|_C=\operatorname{pr}_2\circ
(\operatorname{pr}_1)^{-1}\colon C\to C^{-1}$ is continuous, being
a composition of homeomorphisms.

\smallskip
$(iv)$ The projection $\pi\colon H(S)\rightarrow E(S)$ is
sequentially continuous as a composition of two sequentially
continuous maps.
\end{proof}

Theorem~\ref{th5} implies the following:

\begin{corollary}\label{cor5-1}
Let $S$ be a Hausdorff Clifford countably compact topological
semigroup. Then the following conditions hold:
\begin{itemize}
    \item[$(i)$] each maximal subgroup $H(e)$ is sequentially
                 closed in $S$;
    \item[$(ii)$] the inversion $\operatorname{inv}\colon
                 S\rightarrow S$ is sequentially continuous; and
    \item[$(iii)$] the projection $\pi\colon
                 S\rightarrow E(S)$ is sequentially continuous.
\end{itemize}
\end{corollary}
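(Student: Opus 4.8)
The plan is to reduce the corollary entirely to Theorem~\ref{th5}, exploiting the single structural fact that characterizes a Clifford semigroup. Since $S$ is by hypothesis a union of groups, every element $x\in S$ lies in some group whose identity is an idempotent $f$, and that group is contained in the maximal subgroup $H(f)$; hence $x\in H(f)\subseteq H(S)$. This shows $H(S)=S$, and once this identification is established, each of the three assertions becomes a direct transcription of the corresponding part of Theorem~\ref{th5}.

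First I would observe that $S$ is a Hausdorff countably compact topological semigroup, so Theorem~\ref{th5} applies without modification. Assertion $(i)$ of the corollary is then precisely assertion $(i)$ of Theorem~\ref{th5}: each maximal subgroup $H(e)$ is sequentially closed in $S$. No use of the Clifford hypothesis is needed for this part.

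For assertion $(ii)$, I would substitute $H(S)=S$ into Theorem~\ref{th5}$(iii)$, which yields the sequential continuity of $\operatorname{inv}\colon H(S)\to H(S)$; under the identification this is exactly the sequential continuity of $\operatorname{inv}\colon S\to S$. Assertion $(iii)$ follows in the same way from Theorem~\ref{th5}$(iv)$, the sequential continuity of $\pi\colon H(S)\to E(S)$ becoming that of $\pi\colon S\to E(S)$ under the same substitution.

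Since the corollary is a straightforward specialization, I do not anticipate any genuine obstacle; the only step deserving care is the verification that the defining property of a Clifford semigroup forces $H(S)=S$, so that the domains appearing in Theorem~\ref{th5} fill out all of $S$. I note also that assertion $(ii)$ of Theorem~\ref{th5}, that $H(S)$ is sequentially closed, is rendered vacuous here since $H(S)=S$ is the entire (hence closed) space.
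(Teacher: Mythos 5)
Your proposal is correct and matches the paper exactly: the paper derives Corollary~\ref{cor5-1} as an immediate consequence of Theorem~\ref{th5}, the implicit point being precisely your observation that the Clifford hypothesis forces $H(S)=S$, so that parts $(i)$, $(iii)$ and $(iv)$ of the theorem transcribe into parts $(i)$, $(ii)$ and $(iii)$ of the corollary. Making that identification explicit, as you do, is the right and complete justification.
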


We observe that any sequentially compact (and any hence sequential
countably compact) topological semigroup contains an idempotent
(see \cite[Theorem~8]{BanakhDimitrovaGutik2009x}). For sequential
countably compact semigroup Theorem~\ref{th5} implies the
following:

\begin{corollary}\label{cor5-2}
Let $S$ be a Hausdorff sequential countably compact topological
semigroup. Then the following conditions hold:
\begin{itemize}
    \item[$(i)$] each maximal subgroup $H(e)$ is closed in $S$;
    \item[$(ii)$] the subset $H(S)$ is closed in $S$;
    \item[$(iii)$] the inversion $\operatorname{inv}\colon
                 H(S)\rightarrow H(S)$ continuous; and
    \item[$(iv)$] the projection $\pi\colon
                 H(S)\rightarrow E(S)$ is continuous.
\end{itemize}
\end{corollary}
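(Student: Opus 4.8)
The plan is to derive each of the four assertions directly from the corresponding part of Theorem~\ref{th5}, using only the fact that on a sequential space the prefix ``sequentially'' may be dropped: closedness coincides with sequential closedness, and sequential continuity upgrades to genuine continuity.

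First, for $(i)$ and $(ii)$ I would simply invoke the sequentiality of $S$. By Theorem~\ref{th5}$(i)$ each maximal subgroup $H(e)$ is sequentially closed in $S$, and by Theorem~\ref{th5}$(ii)$ the set $H(S)$ is sequentially closed in $S$. Since $S$ is sequential, the characterization of sequential spaces recalled in the introduction (a subset is closed if and only if, together with any convergent sequence contained in it, it contains the limit of that sequence) tells us that a sequentially closed subset of $S$ is closed. Hence $H(e)$ and $H(S)$ are closed in $S$, which gives $(i)$ and $(ii)$.

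Next, to pass from sequential continuity to honest continuity in $(iii)$ and $(iv)$, I would first establish that the domain $H(S)$ is itself a sequential space. This is where $(ii)$ is used: $H(S)$ is closed in $S$, and a closed subspace of a sequential space is sequential. The verification is routine but worth recording, since sequentiality is \emph{not} preserved by arbitrary subspaces: if $A\subseteq H(S)$ is sequentially closed in $H(S)$, then because $H(S)$ is closed in $S$, any sequence from $A$ converging in $S$ has its limit in $H(S)$, hence in $A$; thus $A$ is sequentially closed in $S$, therefore closed in $S$ by sequentiality, and so closed in $H(S)$.

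Finally, with $H(S)$ known to be sequential, I would apply the standard fact that a sequentially continuous map from a sequential space into an arbitrary topological space is continuous: if $f$ is sequentially continuous and $C$ is closed in the target, then $f^{-1}(C)$ is sequentially closed in $H(S)$ (a convergent sequence in $f^{-1}(C)$ is carried into the closed set $C$, whose limit therefore lies in $C$), hence closed by sequentiality of $H(S)$, so $f$ is continuous. Applying this to $\operatorname{inv}\colon H(S)\to H(S)$, sequentially continuous by Theorem~\ref{th5}$(iii)$, yields $(iii)$; applying it to $\pi\colon H(S)\to E(S)$, sequentially continuous by Theorem~\ref{th5}$(iv)$, yields $(iv)$. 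I expect no real obstacle here: the only step requiring a moment's care is the observation that $H(S)$ inherits sequentiality from $S$ as a closed subspace, and everything else is a direct translation of Theorem~\ref{th5} through the two equivalences that hold on sequential spaces.
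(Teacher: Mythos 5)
Your proposal is correct and follows essentially the same route the paper intends: the paper presents Corollary~\ref{cor5-2} as an immediate consequence of Theorem~\ref{th5} via the standard equivalences valid on sequential spaces, which is exactly your argument. Your one added step --- verifying that $H(S)$ is itself sequential, as a closed subspace of the sequential space $S$, before upgrading the sequential continuity of $\operatorname{inv}$ and $\pi$ to continuity --- is precisely the detail the paper leaves implicit, and you handle it correctly.
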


The following example shows that the closure of a subgroup of a
countably compact topological semigroup need not be a subgroup.

\begin{example}\label{example11Tomita}
Assume $\textsf{MA}\,_{\textrm{countable}}$ holds. Let
$(\mathbb{R},+)$ be the additive topological group of real number
with usual topology and $\mathbb{Z}$ the discrete additive
group of integer. Then $\mathbb{T}=\mathbb{R}/\mathbb{Z}$ is a
topological group. Let $G$ be the group consisting of all
$y\in\mathbb{T}^{\mathfrak{c}}$ such that there exists
$\mu\in\mathfrak{c}$ such that $y(\mu)$ is the identity of
$\mathbb{T}$ for each $\alpha>\mu$.

There exists $x\in\mathbb{T}^{\mathfrak{c}}$ such that  $S=\{
nx+y\mid n\in\omega \hbox{ and } y\in G\}$ is the semigroup with
two-sided cancelation but $S$ is not a group, see
\cite{RobbieSvetlichny1996, Tomita1996}. Since $G$ is dense
subgroup in $\mathbb{T}^{\mathfrak{c}}$, $G$ is dense in $S$.
Tomita 
\cite{Tomita1996} showed that the existence of an
element $x$ in $S$ is independent of $\textsf{ZFC}$. Also
Madariaga-Garcia and Tomita 
\cite{Madariaga-Garcia_Tomita2007}
show that the semigroup $S$ can be constructed from $\mathfrak{c}$
selective ultrafilters.
\end{example}

A topological space $X$ is called \emph{quasi-regular} if for any
non-empty open subset $U$ in $X$ there exists a non-empty open set
$V\subseteq U$ in $X$ such that $\overline{V}\subseteq U$. The
following example shows that there exists a Hausdorff
quasi-regular Clifford inverse countably compact topological
semigroup $S$ with the discontinuous inversion and discontinuous
projection $\pi\colon S\rightarrow E(S)$.

\begin{example}[\cite{BanakhGutik2004}]\label{ex1} Let $\omega_1$
be the smallest uncountable ordinal and $[0,\omega_1)$ be the
well-ordered sets of all countable ordinals, endowed with the
natural order topology (see
\cite[Example~3.10.16]{Engelking1989}). It is well-known that
$[0,\omega_1)$ is a sequentially compact topological space (see:
\cite{Engelking1989}, Example~3.10.16) and simple verification
shows that the semilattice operation $\min$ is continuous on
$[0,\omega_1)$.

Let $\mathbb{T}=\{z\in \mathbb{C} : |z|=1\}$ be the unit circle in
the complex plane with the usual topology and let $\mathbb{T}$
endowed with the operation of multiplication of complex numbers.
Then by Theorem~3.10.35 from~\cite{Engelking1989} the product
$A=[0,\omega_1)\times \mathbb{T}$ is an Hausdorff sequentially
compact commutative topological inverse semigroup, as the
Cartesian product of a topological semilattice and a commutative
topological group.

Let $x=(\omega_1,1)\not\in A$. Put $S=A\cup\{ x\}$ and define a
topology $\tau$ on $S$ letting $A$ be a subspace of $S$ and
$U\subset S$ be a neighborhood of $x$ if there are a positive real
$\varepsilon
>0$ and a countable ordinal $\alpha$ such that $U\supset
U(\alpha,\varepsilon)$ where
\begin{equation*}
U(\alpha,\varepsilon)= \{x\}\cup\{(\beta,e^{i\varphi})\mid
\alpha<\beta<\omega_1,\;\;0<\varphi<\varepsilon\}.
\end{equation*}

Extend the semigroup operation to $S$ letting $x\cdot x=x$ and
$x\cdot a=a\cdot x=a$ for all $a\in A$. It is easy to see that $S$
is a sequentially compact space and the semigroup operation
"$\cdot$" is continuous and commutative on $S$. But the inversion
in $S$ is not continuous since $\left( U(\alpha, \varepsilon
)\right)^{-1}\not\subseteq U(\beta, \delta)$ for all $\alpha,
\beta<\omega_1$ and $\varepsilon, \delta\in(0, 1)$.

Observe also that the subsemigroup of idempotents of the semigroup
$S$ can be identified with the discrete sum of
$[0,\omega_1)\bigsqcup\{\omega_1\}$ and hence is sequentially
compact, locally countable, and locally compact.

Also we observe that the projective map $\pi\colon S\rightarrow
E(S)$ is not continuous.
\end{example}

\begin{remark} Example~\ref{ex1} shows that the requirement of regularity
in Theorems 2 and Corollary 2 is essential and cannot be replaced
by the quasi-regularity. This contrasts with the case of
paratopological groups - see \cite{RavskyReznichenko2002}.
\end{remark}


\section*{Acknowledgements}

This research was supported by the Slovenian Research Agency grants
P1-0292-0101-04, 
J1-9643-0101 and BI-UA/07-08/001.
The authors gratefuly acknowledge the referee for several
comments and suggestions which have considerable improve the original
version of the manuscript.


\end{document}